\documentclass[a4paper,10pt]{article}
\usepackage[utf8]{inputenc}
\usepackage{amsmath}
\usepackage{amsfonts}
\usepackage{amssymb}
\usepackage{amsthm} 
\usepackage{csquotes}
\usepackage[shortlabels]{enumitem}
\usepackage[english]{babel}
\usepackage{cite}
\usepackage{keyval}

\usepackage{xcolor}

\title{A dimensional restriction for a class of contact manifolds}
\author{Eugenia Loiudice}
\date{}

\newtheorem*{theorem*}{Theorem}
\newtheorem{theorem}{Theorem}

\newtheorem{rem}{Remark}
\newtheorem{lemma}{Lemma}
\newtheorem{prop}{Proposition}
\newtheorem{cor}{Corollary}

\DeclareMathOperator{\rank}{rank}
\DeclareMathOperator{\spn}{span}
\newcommand{\di}{\mbox{d}}

\begin{document}

\maketitle

\begin{abstract}

In this work we consider a class of contact manifolds $(M,\eta)$ with an associated almost contact metric structure $(\phi, \xi, \eta,g)$. This class contains, for example, nearly cosymplectic manifolds and the manifolds in the class $C_9\oplus C_{10}$ defined by Chinea and Gonzalez.

All manifolds in the class considered turn out to have dimension $4n+1$. Under the assumption that the sectional curvature of the horizontal $2$-planes is constant at one point, we obtain that these manifolds must have dimension $5$.

\end{abstract}

\vspace{3 mm}
\noindent \emph{Mathematics Subject Classification (2000):} 53D15, 53D25, 53C15, 53D10

\noindent \emph{Keywords:} almost contact metric structure, contact manifold, Chinea-Gonzalez classification.

\section{Introduction}

A \emph{contact manifold} is a $\mathcal{C}^\infty$ odd-dimensional manifold $M^{2n+1}$ together with a $1-$form $\eta$, usually called a \emph{contact form} on $M$, such that $\eta\wedge (d\eta)^n \neq 0$ everywhere on $M;$ the \emph{contact distribution} $D$ is the vector subbundle of $TM$ defined by  
$$
D:=\ker \eta.
$$ 
We shall denote by $D_p$ the fiber of $D$ at a point $p$; moreover if $X\in \mathfrak{X}(M)$ is a vector field, we shall write $X\in D$ to indicate that $X$ is a section of $D$.

It is known that $\di\eta|_{D_p\times D_p}$ is non degenerate and 
$$
T_p M=D_p\oplus \ker \di\eta _p
$$
for each $p\in M$.

In \cite{chern} Chern showed that the existence of a contact form $\eta$ on a manifold $M^{2n+1}$ implies that the structural group of the tangent bundle $TM$ can be reduced to the unitary group $U(n)\times 1$. Such a reduction of the structural group of the tangent bundle of a manifold $M^{2n+1}$ is called an \emph{almost contact structure}. In term of structure tensors we say that an \emph{almost contact structure} on a manifold $M^{2n+1}$ is a triple $(\phi,\xi,\eta)$ consisting of a tensor field $\phi$ of type $(1,1),$ a vector field $\xi$ and a $1-$form $\eta$ satisfying 
$$
\phi ^2=-I+\eta \otimes \xi, \quad \eta(\xi)=1,
$$
see \cite{blair2010riemannian} p. 43. It then follows directly from the definition of almost contact structure that $\phi \xi=0, \eta \circ \phi =0$, and that the endomorphism $\phi$ has rank $2n.$ If, in addition, $M$ is endowed with a Riemannian metric $g$ such that 
$$
g(\phi X,\phi Y)=g(X,Y)-\eta(X)\eta(Y),
$$
then $(\phi,\xi,\eta,g)$ is said to be an \emph{almost contact metric structure} on $M.$
Thus, setting $Y=\xi,$ we have immediately that 
$$
\eta (X)=g(X,\xi).
$$

Every contact manifold $(M^{2n+1},\eta)$ admits an almost contact metric structure $(\phi,\xi,\eta,g)$ such that 
$$
\di\eta(X,Y)=g(X, \phi Y).
$$
In this case $g$ is an \emph{associated metric} and we speak of a \emph{contact metric structure}; the vector field $\xi$ is the Reeb vector field of $M^{2n+1}$ \cite{blair2010riemannian}.  
Of course, it is possible to have a contact manifold $(M^{2n+1},\eta)$ with Reeb vector field $\xi$ and an almost contact metric structure $(\phi,\xi,\eta,g)$ on $M$ without $\di \eta(X,Y)=g(X, \phi Y).$

One can also observe that every contact manifold with an almost contact metric structure $(\phi,\xi,\eta,g)$ satisfying $(\nabla_X \phi)X=0,$ or equivalently $(\nabla_X \phi)Y+(\nabla_Y \phi)X=0$, i.e., with a \emph{nearly cosymplectic structure}, satisfy the following condition
\begin{equation} \label{*} 
 \phi \circ \nabla \xi + \nabla \xi \circ \phi =0\tag{$*$}
\end{equation}
and of course does not satisfy the contact metric condition $\di \eta(X,Y)=g(X, \phi Y).$ Here $\nabla$ denotes the Levi-Civita connection of $g$ and $\nabla \xi$ is the bundle endomorphism of $TM$ defined by 
$X\mapsto \nabla _X \xi$. A well-known example of this situation is given by the five-dimensional sphere $S^5.$ This is a consequence of the following theorem (\cite{blair2010riemannian}, Theorem 6.14)
\begin{theorem*}
Let $i:M^{2n+1}\rightarrow \tilde{M}^{2n+2}$ be a hypersurface of a nearly K$\ddot{a}$hler manifold $(\tilde{M}^{2n+2},J,\tilde{g}).$ Then the induced almost contact structure $(\phi,\xi,\eta,g)$ satisfies $(\nabla_X\phi)X=0$ if and only if the second fundamental form $\sigma$ is proportional to $(\eta\otimes \eta)Ji_*\xi.$
\end{theorem*}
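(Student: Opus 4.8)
The plan is to work directly with the Gauss and Weingarten formulas for $i:M^{2n+1}\hookrightarrow\tilde M^{2n+2}$, keeping careful track of tangential versus normal components. Fix a unit normal $N$ along $M$ and identify $TM$ with a subbundle of $T\tilde M$ along $M$. The induced almost contact metric structure is defined by $JX=\phi X+\eta(X)N$ for $X$ tangent to $M$ together with $\xi=-JN$, so that $JN=-\xi$, $Ji_*\xi=N$, and $\eta(X)=\tilde g(JX,N)=g(X,\xi)$ where $g$ is the induced metric. Write the Gauss formula $\tilde\nabla_XY=\nabla_XY+\sigma(X,Y)N$ and the Weingarten formula $\tilde\nabla_XN=-AX$, where $A$ is the shape operator with $g(AX,Y)=\sigma(X,Y)$; throughout I conflate $\sigma$ with its scalar version and read $(\eta\otimes\eta)Ji_*\xi$ as the normal-bundle-valued tensor $(X,Y)\mapsto\eta(X)\eta(Y)N$.

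First I would expand $(\tilde\nabla_XJ)Y=\tilde\nabla_X(JY)-J\tilde\nabla_XY$ for $X,Y$ tangent to $M$, substituting $JY=\phi Y+\eta(Y)N$, $JN=-\xi$, the Gauss--Weingarten formulas, and the identity $X\eta(Y)-\eta(\nabla_XY)=g(Y,\nabla_X\xi)$. This gives
\begin{equation*}
(\tilde\nabla_XJ)Y=(\nabla_X\phi)Y+\bigl[\sigma(X,\phi Y)+g(Y,\nabla_X\xi)\bigr]N-\eta(Y)\,AX+\sigma(X,Y)\,\xi,
\end{equation*}
in which the first, third and fourth terms on the right are tangent to $M$ and only the bracketed term is normal. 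Setting $Y=X$ and invoking the nearly K\"ahler hypothesis as $(\tilde\nabla_XJ)X=0$, a comparison of normal components forces $\sigma(X,\phi X)+g(X,\nabla_X\xi)=0$, while the tangential part yields the key identity
\begin{equation*}
(\nabla_X\phi)X=\eta(X)\,AX-\sigma(X,X)\,\xi,
\end{equation*}
valid on any hypersurface of a nearly K\"ahler manifold. (The vanishing of the normal part can also be checked directly from $\nabla_X\xi=-(\tilde\nabla_XJ)N+\phi AX$ and the skew-symmetry of $\tilde\nabla_XJ$.)

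The theorem then reduces to fibrewise linear algebra on this identity. If $\sigma=f\,(\eta\otimes\eta)Ji_*\xi$, that is $AX=f\,\eta(X)\xi$ and $\sigma(X,X)=f\,\eta(X)^2$ for a function $f$, the right-hand side is identically zero, so $(\nabla_X\phi)X=0$. Conversely, suppose $\eta(X)\,AX=\sigma(X,X)\,\xi$ for all $X$. Taking $X=\xi$ gives $A\xi=\lambda\xi$ with $\lambda=\sigma(\xi,\xi)$; taking $X\in D=\ker\eta$ gives $\sigma(X,X)=0$, hence $\sigma\equiv0$ on $D\times D$ by polarization; and testing with $X=\xi+Y$, $Y\in D$, forces $\sigma(\xi,Y)=0$ and $AY=0$ for $Y\in D$. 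Therefore $AX=\lambda\,\eta(X)\xi$, equivalently $\sigma(X,Y)=\lambda\,\eta(X)\eta(Y)$, which is precisely the statement that $\sigma$ is proportional to $(\eta\otimes\eta)Ji_*\xi$, the proportionality function being $\sigma(\xi,\xi)$.

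The main obstacle is the bookkeeping in the second step: carrying out the decomposition of $(\tilde\nabla_XJ)Y$ with a coherent choice of sign conventions for $\phi$, $\xi=-JN$ and $A$, and observing that the nearly K\"ahler assumption is used only through $(\tilde\nabla_XJ)X=0$, after which the normal component is annihilated for purely tangential reasons. Once the key identity $(\nabla_X\phi)X=\eta(X)AX-\sigma(X,X)\xi$ is established, the equivalence in the theorem is immediate.
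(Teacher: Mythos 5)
This theorem is not proved in the paper: it is quoted verbatim from Blair (\cite{blair2010riemannian}, Theorem 6.14) as background, so there is no in-paper proof to compare against. Your argument is correct and is essentially the standard one from Blair's book: the Gauss--Weingarten decomposition of $(\tilde\nabla_X J)Y$ is right (the normal component $\sigma(X,\phi X)+g(X,\nabla_X\xi)$ does vanish identically, as your parenthetical notes), the tangential component gives the key identity $(\nabla_X\phi)X=\eta(X)AX-\sigma(X,X)\xi$, and the concluding linear algebra (eigenvector at $\xi$, polarization on $D$, testing $\xi+Y$) correctly pins down $\sigma=\sigma(\xi,\xi)\,\eta\otimes\eta\otimes N$ with $N=Ji_*\xi$.
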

If we consider $S^5$ as a totally geodesic hypersurface of $S^6,$ we have that the nearly K\"{a}hler structure $(J,\tilde{g})$ on $S^6,$ defined as in Example 4.5.3 of \cite{blair2010riemannian}, induces an almost contact metric structure $(\phi,\xi,\eta,g)$ on $S^5$ satisfying $(\nabla_X\phi)X=0.$

In the next section we will treat contact manifolds with an almost contact metric structure satisfying condition \eqref{*}. Such manifolds will result of dimension $4n+1$, $n\geqslant 1$. If we suppose that $\phi$ is $\eta$-parallel and the sectional curvature of the horizontal $2$-planes is constant at one point, then we obtain that these manifolds have dimension $5$ (Theorem~\ref{principale}).

It is well known that the contact condition imposes strong restrictions on the Riemannian curvature of an associated metric.
For example Z. Olszak in \cite{Olszak} proves that if an associated metric has constant curvature, then $c=1$ and $g$ must be a Sasakian metric; earlier D.E. Blair in \cite{blair1976} showed that in dimension $\geqslant 5$ there are no flat associated metrics.
We obtain that this is sometimes true also in the case of non associated metrics; for example when $g$ is the metric of a nearly cosymplectic structure, see Theorem~\ref{Th_nearly} in Section~\ref{sec:near_cos_case}.

\section{A class of contact manifolds}

Let $(\phi,\xi,\eta,g)$ be an almost contact metric structure on a contact manifold $(M,\eta)$. We denote by $A$ the vector bundle endomorphism $\nabla \xi :TM\rightarrow TM$. 
Let $B:D\rightarrow D$ be the skew-symmetric part of $A|_D,$ i.e.,
$$
B=\frac{1}{2}(A|_D-A^*)
$$ 
where $A^*$ is the adjoint of $A|_{D}$ with respect to $g|_{D\times D}.$ Then, for all $X,Y \in D,$ we have 
\begin{equation}\label{B}
 \di \eta(X,Y)=-\frac{1}{2} \eta ([X,Y])=-\frac{1}{2}g([X,Y],\xi)=g(BX,Y).
\end{equation}
Even if $\eta$ is a contact form, $\xi$ in general is not the Reeb vector field of $\eta .$

\begin{prop}\label{lemmaB}
Let $(\phi,\xi,\eta, g)$ be an almost contact metric structure on a contact manifold $(M,\eta)$ such that 
$$
\di \eta(\phi X,\phi Y)=-\di \eta(X,Y), \text{ for all } X,Y\in D
$$ 
or equivalently 
$$
B\phi+\phi B=0 \text{ on } D.
$$
Then $\dim M=4n+1, n\geqslant 1$ and $B:D\rightarrow D$ is a bundle automorphism.
\end{prop}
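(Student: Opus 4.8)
The plan is to establish both conclusions by fibrewise linear algebra on $D_p$, using only that on $D$ one has $\phi^2=-I$ (since $\eta$ vanishes on $D$), that $B$ is $g$-skew-symmetric, that $B\phi+\phi B=0$ by hypothesis, and that $\di\eta|_{D_p\times D_p}$ is non-degenerate because $\eta$ is a contact form.

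First I would prove that $B$ is a bundle automorphism. By \eqref{B} one has $\di\eta(X,Y)=g(BX,Y)$ for all $X,Y\in D$. If $B_pX=0$ for some $X\in D_p$, then $\di\eta(X,Y)=0$ for every $Y\in D_p$, which forces $X=0$ by non-degeneracy of $\di\eta|_{D_p\times D_p}$; hence $B_p$ is injective, and therefore bijective, on the finite-dimensional space $D_p$. So $B$ is a bundle automorphism of $D$.

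Next comes the dimension restriction. Fix $p$ and write $B$, $\phi$ for the induced endomorphisms of $D_p$. Since $B$ is $g$-skew-symmetric, $B^2$ is $g$-symmetric, and $g(B^2X,X)=-\|BX\|^2<0$ for $X\neq 0$ by the invertibility of $B$; hence $B^2$ is symmetric and negative definite, so $D_p$ decomposes $g$-orthogonally into eigenspaces $V_\lambda$ of $B^2$, with $\lambda<0$. From $B\phi+\phi B=0$ we get $B^2\phi=\phi B^2$, and $B$ commutes with $B^2$ trivially, so both $B$ and $\phi$ preserve each $V_\lambda$. On $V_\lambda$ put $\mu=\sqrt{-\lambda}>0$ and set $J_1=\mu^{-1}B$, $J_2=\phi$, $J_3=J_1J_2$. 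Then $J_1^2=\mu^{-2}B^2=-\mathrm{Id}$, $J_2^2=-\mathrm{Id}$, and $J_1J_2=\mu^{-1}B\phi=-\mu^{-1}\phi B=-J_2J_1$; it follows that $J_3^2=-\mathrm{Id}$, $J_2J_3=J_1$ and $J_3J_1=J_2$, so $J_1,J_2,J_3$ make $V_\lambda$ a module over the quaternion algebra, whence $\dim_{\mathbb{R}}V_\lambda\equiv 0\pmod 4$. Summing over $\lambda$ gives $\dim D_p\equiv 0\pmod 4$, so $\dim M=4n+1$; and $n\geqslant 1$ since the contact distribution of a contact manifold is non-trivial.

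The computations are routine; the only points deserving attention are that the eigenvalues of $B^2$ may vary over $M$, so the quaternionic structure exists only on each fibre (which suffices, as the dimension of a manifold is locally constant), and the verification that $J_1,J_2,J_3$ obey the full set of quaternion relations rather than merely pairwise anticommutation --- this is what actually forces the dimension to be divisible by $4$.
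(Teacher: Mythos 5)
Your proof is correct. The first half (that $B$ is an automorphism) is exactly the paper's argument: non-degeneracy of $\di\eta|_{D_p\times D_p}$ plus $\di\eta(X,Y)=g(BX,Y)$. For the dimension statement the paper instead invokes Lemma~\ref{dim D}\eqref{2}, whose proof shares your starting point --- diagonalize the symmetric operator $B^2$ (there written $A^2$) and use that the skew-symmetric invertible $B$ anticommutes with $\phi$ --- but finishes by an explicit induction: it extracts pairwise $g$-orthogonal quadruples $X,\phi X,BX,\phi BX$ of eigenvectors of $B^2$ one at a time until $D_p$ is exhausted. You finish more conceptually: after rescaling $B$ by $(-\lambda)^{-1/2}$ on each eigenspace $V_\lambda$ you exhibit anticommuting complex structures $J_1,J_2$, hence a module structure over the quaternion algebra, and quote that modules over a division algebra are free, so $4\mid\dim_{\mathbb R}V_\lambda$. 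Both routes are sound; your packaging is cleaner and makes the ``why $4$'' transparent, whereas the paper's hands-on lemma has the side benefit of also producing point (1) of Lemma~\ref{dim D} (the existence of $Y,Z$ with $Z\perp\spn\{Y,\phi Y,BY\}$ and $g(Z,\phi BY)\neq0$), which is reused later in the proof of Theorem~\ref{principale}; your argument would need a small supplement to recover that. Your closing remarks (fibrewise validity suffices, and pairwise anticommutation alone is not what forces divisibility by $4$) are both apt.
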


\begin{proof} 
We know that if $(M,\eta)$ is a contact manifold then $\di \eta|_{D\times D}$ is non degenerate. Thus equation \eqref{B} imply that $B$ is an automorphism. 
The fact that $\dim M=4n+1$ is an application of Lemma \ref{dim D}, point \ref{2}.
\end{proof}

\begin{lemma} \label{dim D}
Let $<,>$ be an Hermitian scalar product on a complex vector space $(D,J).$ If $A:D\rightarrow D$ is a nonzero linear operator such that $AJ+JA=0,$ then 
\begin{enumerate}
 \item there exist $Y,Z\in D$ such that $Y,J Y,AY$ are linearly independent, $Z\in \spn \{Y,J Y, AY \}^\perp$and $< Z, J AY>\neq 0$;
 \item if $A$ is non singular and skew-symmetric then $ \dim D \equiv 0 \pmod{4}$. \label{2}
\end{enumerate}
\end{lemma}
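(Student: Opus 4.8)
My plan is to produce the two vectors in part (1) essentially by hand and then bootstrap part (2) from part (1) by an induction on $\dim D$, peeling off a $4$-dimensional $(A,J)$-invariant piece at each step.

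For part (1): since $A \neq 0$ pick $Y$ with $AY \neq 0$. The anticommutation relation $AJ + JA = 0$ gives $A(JY) = -J(AY)$, so the span of $Y, JY, AY$ is automatically $(A,J)$-stable once we throw in $J(AY)$; the real issue is showing $Y, JY, AY$ can be chosen linearly independent and that $JAY$ is not orthogonal to their span's complement. I would first argue that $AY \notin \spn\{Y, JY\}$ for a generic choice of $Y$: if $AY = aY + bJY$ held on an open set of $Y$'s, polarizing and using $AJ = -JA$ would force $A$ to be essentially a multiple of $J$ on that set, hence by skew-symmetry (anticipating we may assume it, or just by the structure of the relation) one gets $A = 0$ or a contradiction with $A \neq 0$; a cleaner route is to note that if $A$ preserves every $\spn\{Y,JY\}$ then $A$ commutes with $J$ on each such plane, which together with $AJ=-JA$ forces $AJ = 0$, i.e. $A=0$. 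So fix $Y$ with $Y,JY,AY$ independent. Now I want $Z \perp \spn\{Y,JY,AY\}$ with $\langle Z, JAY\rangle \neq 0$. Note $JAY \perp AY$ (since $\langle JAY, AY\rangle = 0$ for the Hermitian product) and $JAY \perp Y, JY$ would say $JAY \in \spn\{Y,JY,AY\}^\perp$ already — in that case take $Z = JAY$ and we are done since $\langle JAY, JAY\rangle = \|AY\|^2 \neq 0$. Otherwise write $JAY = u + w$ with $u \in \spn\{Y,JY,AY\}$, $w$ in the orthogonal complement; I claim $w \neq 0$, because $JAY \notin \spn\{Y,JY,AY\}$: indeed $AY, JAY$ together with $Y,JY$ — if $JAY$ were in that span, then applying $J$ and using $JJAY = -AY$ would pin down relations that, combined with independence of $Y,JY,AY$ and $J$-invariance, are contradictory (this is the one spot needing a short explicit computation). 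Then $Z = w$ works.

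For part (2), assume $A$ is nonsingular and skew-symmetric, $AJ + JA = 0$, $\dim D \geq 1$. The plan is to exhibit a $4$-dimensional subspace $W \subseteq D$ that is invariant under both $J$ and $A$, and whose orthogonal complement $W^\perp$ is also invariant under both $J$ and $A$ with $A|_{W^\perp}$ still nonsingular and skew. Then $\dim D = 4 + \dim W^\perp$ and induction finishes it (the base case $\dim D = 0$ being vacuous, and $\dim D \in \{1,2,3\}$ being excluded because such a $W$ cannot exist there, contradicting the construction). To build $W$: take $Y$ with $Y, JY, AY$ independent as in part (1), and set $W = \spn\{Y, JY, AY, JAY\}$. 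This is $J$-invariant by construction and $A$-invariant because $A(JY) = -J(AY)$, $A(AY) = A^2 Y$, and $A(JAY) = -J A^2 Y$, so I need $A^2 Y \in \spn\{Y, AY, JY, JAY\}$ — equivalently $A^2$ preserves $\spn\{Y, JY\}$... no: more simply, I should choose $Y$ to be an eigenvector of the self-adjoint nonsingular operator $-A^2$ (which has positive eigenvalues since $A$ is skew and nonsingular). Say $A^2 Y = -\lambda Y$, $\lambda > 0$. Then $A^2(JY) = J A^2 Y = -\lambda JY$ as well, and $W = \spn\{Y, JY, AY, JAY\}$ is visibly invariant under $A$ and $J$; one checks these four vectors are orthogonal (using skew-symmetry of $A$, the Hermitian identities $\langle JU, U\rangle = 0$, and $\langle AU, U\rangle = 0$) and nonzero, so $\dim W = 4$. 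Then $W^\perp$ is $A$-invariant (adjoint of $A$-invariant subspace under skew $A$) and $J$-invariant, $A|_{W^\perp}$ is skew and nonsingular, and we recurse.

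The main obstacle I expect is the bookkeeping in part (1) showing $JAY \notin \spn\{Y, JY, AY\}$ (equivalently that the four vectors $Y, JY, AY, JAY$ are linearly independent), since one must rule out a nontrivial relation $aY + bJY + cAY + dJAY = 0$ by applying $J$ and $A$ and using $A^2Y = -\lambda Y$ together with the independence of the first three — it is elementary but is the one place a genuine (short) computation is unavoidable. Everything else reduces to the two standard facts $A(JY) = -J(AY)$ and the orthogonality relations forced by skew-symmetry of $A$ and $J$-Hermitian-ness, plus the observation that the eigenspaces of $-A^2$ are $J$-invariant, which makes the inductive descent clean.
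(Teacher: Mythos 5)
Your overall strategy coincides with the paper's: in part (1) you argue by contradiction that some $Y$ has $Y,JY,AY$ independent and then take $Z$ to be the component of $JAY$ orthogonal to $\spn\{Y,JY,AY\}$ (noting $\langle Z,JAY\rangle=\|Z\|^2\neq 0$); in part (2) you split off the four pairwise orthogonal vectors $Y,JY,AY,JAY$ built from an eigenvector of the self-adjoint operator $A^2$ and recurse on the $(A,J)$-invariant orthogonal complement, which is exactly the paper's iteration. Part (2) and the $Z$-construction are sound as sketched.

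The genuine gap is in your justification of the first step of part (1), the existence of $Y$ with $Y,JY,AY$ independent. Your ``cleaner route'' asserts that if $A$ preserves every plane $\spn\{Y,JY\}$ then $A$ commutes with $J$ on each such plane; this is false. A map preserving such a plane and satisfying $AJ=-JA$ restricts to a \emph{conjugate}-linear map of that plane, i.e.\ one of the form $\left(\begin{smallmatrix} a & b \\ b & -a\end{smallmatrix}\right)$ in the basis $\{Y,JY\}$, and these are plentiful and nonzero. (Indeed, for $\dim_{\mathbb{C}}D=1$ every nonzero anticommuting $A$ is of this form and preserves the unique plane, so any correct argument must use $\dim_{\mathbb{C}}D\geqslant 2$ somewhere --- your cleaner route does not.) Your first route, polarization, is the right idea and is what the paper does, but as described it is off in two ways: polarization forces the blocks $\left(\begin{smallmatrix} a_i & b_i \\ b_i & -a_i\end{smallmatrix}\right)$ to agree across distinct complex lines, not $A$ to be ``a multiple of $J$''; and skew-symmetry is neither assumed in part (1) nor needed. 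The missing finish is the paper's test vector $JX_1+X_2$: its image must lie in $\spn\{JX_1+X_2,\,-X_1+JX_2\}$, and with the common block this forces $a=b=0$, contradicting $A\neq 0$. A minor further point: ruling out $JAY\in\spn\{Y,JY,AY\}$ needs only one application of $J$ (the coefficient $\gamma$ of $AY$ in such a relation would satisfy $\gamma^2=-1$); the eigenvalue relation $A^2Y=-\lambda Y$ you invoke there is neither available in part (1) nor needed.
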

\begin{proof}
Let $X_1,..,X_n\in D$ be vectors such that $\{X_1,JX_1,..,X_n,JX_n\}$ is a basis of $D.$ We begin by proving the existence of a vector $Y\in D$ such that $Y,J Y,AY$ are linearly independent. If by contradiction $AY\in \spn \{Y,J Y\}$ for all $Y\in D,$ then  
 \begin{equation*}
  \begin{aligned}
   AX_i \in \spn \{X_i,J X_i\},\\
   AJX_i=-JAX_i \in \spn \{ X_i,J X_i\},
  \end{aligned}
 \end{equation*}
and hence $A$ is represented with respect to our basis by a block-diagonal matrix of the form

     $$ \begin{pmatrix}
  \begin{matrix}
            a_1 & b_1\\
            b_1 & -a_1
           \end{matrix} & \mathbf{0} & \cdots & \mathbf{0} \\
  \mathbf{0} & \begin{matrix}
            a_2 & b_2\\
            b_2 & -a_2
           \end{matrix}  & \cdots & \mathbf{0} \\
  \vdots  & \vdots  & \ddots & \vdots  \\
  \mathbf{0} & \mathbf{0} & \cdots  & \begin{matrix}
            a_n & b_n\\
            b_n & -a_n
           \end{matrix}
 \end{pmatrix}$$
 where $\mathbf{0}=\begin{pmatrix}
                    0 &0\\
                    0 &0
                   \end{pmatrix}$
and $a_i,b_i \in \mathbb{R}$, $i\in \{1,..,n\}.$  
Since
 $$
 A(X_i+X_j) \in \spn \{ X_i+X_j,JX_i+JX_j\},
 $$ 
we have $a_i=a_j$ and $b_i=b_j.$ Thus 
 $$ A \equiv \begin{pmatrix}
  \begin{matrix}
            a_1 & b_1\\
            b_1 & -a_1
           \end{matrix} & \mathbf{0} & \cdots & \mathbf{0} \\
  \mathbf{0} & \begin{matrix}
            a_1 & b_1\\
            b_1 & -a_1
           \end{matrix}  & \cdots & \mathbf{0} \\
  \vdots  & \vdots  & \ddots & \vdots  \\
  \mathbf{0} & \mathbf{0} & \cdots  & \begin{matrix}
            a_1 & b_1\\
            b_1 & -a_1
           \end{matrix}
 \end{pmatrix}.$$ 
 Now we consider $JX_1+X_2$. Since
 \begin{equation*}
   A(JX_1+X_2)\in \spn \{JX_1+X_2,-X_1+JX_2\}
  \end{equation*}
 it follows $a_1=b_1=0.$ This contradicts the hypothesis $A\neq 0$.
 
Let $Y\in D$ be such that $Y,J Y,AY$ are linearly independent. We can observe that 
$$
J AY\notin \spn \{Y,J Y,AY\},
$$ 
so that $J AY=W+Z$, with $W\in \spn \{Y,J Y,AY\}$ and $Z\in \spn \{Y,J Y,AY\}^\perp$, $Z\neq 0$. Thus we found $Z\in D$ orthogonal to $Y,J Y,AY$ such that $<Z,J AY>\neq 0$. 

Now we assume that $A$ is non singular and skew-symmetric. Let $X\in D$ be an eigenvector of the symmetric linear operator $A^2.$ Since $A$ anti-commutes with $J,$ we have that $JX,AX,JAX$ are also eigenvectors of $A^2.$ Moreover the vectors $X,JX,AX,JAX$ are pairwise orthogonal and hence $\dim D \geqslant 4.$ 
 
Assume $\dim D >4.$ By the Spectral Theorem we can choose $Y\in D$ eigenvector of $A^2$ orthogonal to $X, JX,AX,JAX.$ We have that
$$
X,JX,AX,J AX,Y,JY,AY,JAY
$$
are eigenvectors of $A^2,$ pairwise orthogonal and hence $\dim D \geqslant 8.$ Iterating this argument we obtain the assertion.
\end{proof}

\vspace{2 mm}
After these preliminaries we can state our main result that involve contact manifolds with an almost contact metric structure satisfying condition \eqref{*}.

\begin{theorem}\label{principale}
Let $(\phi,\xi,\eta, g)$ be an almost contact metric structure on a contact manifold $(M^{2n+1},\eta)$ such that 
\begin{equation}\label{Ap+pA=0}
A \phi+\phi A=0
\end{equation}
\begin{equation}\label{g(nabla phi)=0}
g((\nabla_X \phi)Y,Z)=0
\end{equation}
for each $X,Y,Z\in D$.

Suppose there exist $p\in M$ and $c\in \mathbb{R}$ such that the sectional curvature  $K_p(\pi)=c,$ for each $2-$plane $\pi$ of $D_p.$ Then $\dim M=5$.
Moreover $A_p$ is an isomorphism if and only if $c \neq 0.$

\end{theorem}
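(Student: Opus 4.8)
The plan is to exploit the two hypotheses \eqref{Ap+pA=0} and \eqref{g(nabla phi)=0} to force a curvature identity on horizontal planes that is incompatible with $\dim D_p > 4$. First I would record the algebraic consequences of the setup at the point $p$: from \eqref{Ap+pA=0} the skew-symmetric part $B$ of $A|_{D}$ anti-commutes with $\phi$, and by Proposition~\ref{lemmaB} we already know $\dim M = 4n+1$ with $B$ a bundle automorphism; so $\dim D_p = 4n$ and I want to show $n=1$. The key is that \eqref{g(nabla phi)=0} says $\phi$ is "horizontally parallel" in the sense that $(\nabla_X\phi)Y$ has no $D$-component for $X,Y\in D$; combined with $\nabla g = 0$ and $\phi^2 = -I$ on $D$, this lets me compute $g((\nabla_X\phi)Y,\xi)$ and $g((\nabla_X\phi)Y,\phi Z)$ explicitly in terms of $A$ and $\di\eta$, and hence get a usable formula for the full tensor $(\nabla_X\phi)Y$ for horizontal $X,Y$.

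Next I would pass to the curvature. Using the standard Ricci-type identity expressing $R(X,Y)\phi Z - \phi R(X,Y)Z$ in terms of first and second covariant derivatives of $\phi$, together with the just-derived formula for $(\nabla\phi)|_{D\times D}$, I would extract the component of the horizontal curvature along $\phi$ of a horizontal vector. The constant-curvature hypothesis $K_p(\pi)=c$ on all $2$-planes in $D_p$ means that on $D_p$ the curvature operator has the model form
\[
g(R_p(X,Y)W,Z) = c\bigl(g(X,Z)g(Y,W) - g(Y,Z)g(X,W)\bigr)
\]
for $X,Y,Z,W\in D_p$ — this is the usual Schur-type observation that sectional curvature constant on all planes of a subspace forces this tensorial shape on that subspace. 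Plugging this model into the $\phi$-commutation identity and pairing against suitable test vectors produces a scalar identity linking $c$, the eigendata of $A_p^2$, and $\di\eta$.

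The main obstacle, and the heart of the argument, is turning that identity into the dimension bound. I expect the mechanism of Lemma~\ref{dim D}, point~1 to be exactly what is needed: if $\dim D_p \geq 8$ (equivalently $n\geq 2$), then, viewing $(D_p,\phi_p)$ as a complex space with $A_p$ (or its skew part $B$) anti-commuting with $\phi$, there are vectors $Y,Z$ with $Y,\phi Y, A Y$ independent, $Z\perp\spn\{Y,\phi Y,AY\}$ and $g(Z,\phi A Y)\neq 0$. Feeding these special vectors into the curvature identity — where the constant-curvature model kills all the "cross" terms $g(X,Z)$ etc. that vanish by orthogonality, but the $\phi$-terms coming from $(\nabla\phi)$ do not vanish because $g(Z,\phi AY)\neq 0$ — should yield a contradiction unless $\dim D_p = 4$, i.e. $n=1$ and $\dim M = 5$. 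Finally, with $\dim M = 5$ fixed, I would separately analyze when $A_p$ is an isomorphism: since $B$ is always an automorphism of $D_p$ and $A_p = B + (\text{symmetric part})$, the failure of $A_p$ to be invertible corresponds precisely to the symmetric part balancing $B$ in a way that, through the curvature identity specialized to $\dim 5$, happens exactly when $c=0$; conversely $c\neq 0$ forces $A_p^2$ to have no kernel. I would phrase this last equivalence as a short computation using the $4$-dimensional case of the eigenvector argument in Lemma~\ref{dim D}.
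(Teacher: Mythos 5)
Your overall strategy coincides with the paper's: derive an identity for $R(X,Y)\phi Z-\phi R(X,Y)Z$ on horizontal vectors from \eqref{g(nabla phi)=0} (the paper packages the computation in a modified connection $\tilde{\nabla}=\nabla+H$ with $(\tilde{\nabla}_X\phi)Y=0$ on $D$, you propose the equivalent route via the second covariant derivative of $\phi$, using $(\nabla_X\phi)Y=g(Y,\phi AX)\xi$ for $X,Y\in D$), substitute the constant-curvature model on $D_p$, test against the vectors supplied by Lemma~\ref{dim D}(1), and invoke Proposition~\ref{lemmaB} for the lower bound $\dim M\geqslant 5$. However, there is a genuine gap at the decisive step. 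Feeding the special vectors $Y,Z$ (with $Z\perp\spn\{Y,\phi Y,AY\}$ and $g(Z,\phi AY)\neq 0$) into the identity does \emph{not} produce a contradiction with $\dim D_p>4$, as you claim; since the coefficient $g(AY,\phi Z)$ of $AX$ is nonzero, it only shows that $AX$ lies, for every $X\in D_p$, in a fixed subspace of dimension at most $5$, i.e.\ $\rank(A_p)\leqslant 5$. To convert this into $\dim M=5$ you need a lower bound on $\rank(A_p)$. When $c\neq 0$ this comes from $A_p$ being an isomorphism --- but that is itself one of the theorem's claims and requires a separate curvature computation (the paper evaluates $\tilde R(X,Y)\phi X-\phi\tilde R(X,Y)X$ on a putative kernel vector $X$ and a suitable $Y$ to get $cg(X,X)\phi Y=0$). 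When $c=0$, $A_p$ is \emph{not} an isomorphism, and the only generic lower bound ($\ker A_p$ is $\di\eta_p$-isotropic, so $\rank(A_p)\geqslant n$) combined with $\rank(A_p)\leqslant 5$ and $2n\equiv 0\pmod 4$ still allows $\dim M=9$. The paper closes this case by a different, sharper argument: taking $Z=\phi AY$ in \eqref{W} gives $\rank(A_p)\leqslant 3$, hence a kernel vector, hence $((\tilde{\nabla}_\xi\phi)Z-B\phi Z)^H=0$, and then a second pass gives $\rank(A_p)\leqslant 2$, which together with the isotropy of $\ker A_p$ forces $n\leqslant 2$. Your plan contains neither of these two ingredients, and your treatment of the equivalence ``$A_p$ isomorphism $\Leftrightarrow$ $c\neq 0$'' is essentially a restatement of the claim rather than an argument.
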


\begin{proof}
For each vector field $Z$ on $M$, we denote by $Z^H$ and $Z^V$ the components of $Z$ in $D$ and in its orthogonal complement $D^{\perp}$ respectively. We say that $Z^H$ is the \emph{horizontal part} of $Z$ and $Z^V$ the \emph{vertical part} of $Z$. 
Let $\nabla$ be the Levi-Civita connection of $g.$ We define a new linear connection 
$$
\tilde{\nabla} :=\nabla + H
$$
on $M$ such that for each $X,Y\in D$
\begin{equation*}
 H(X,\xi)=-AX,  \quad H(X,Y)=g(AX,Y)\xi,\quad H(\xi,X)=\frac{1}{2}BX,\quad H(\xi,\xi)=0.
\end{equation*}
Then for each $X,Y \in D$
$$
(\tilde{\nabla}_X \phi)Y=0,
$$
and hence for each $X,Y,Z\in D$ we have that $\tilde{\nabla}_XY\in D$ and also
\begin{equation}\label{R}
\begin{aligned}
\tilde{R} (X,Y)\phi Z-\phi \tilde{R} (X,Y)Z =& \tilde{\nabla}_X \tilde{\nabla}_Y \phi Z-\tilde{\nabla}_Y \tilde{\nabla}_X \phi Z-\tilde{\nabla}_{[X,Y]}\phi Z\\
                                             &-\phi (\tilde{\nabla}_X \tilde{\nabla}_Y Z-\tilde{\nabla}_Y \tilde{\nabla}_X Z-\tilde{\nabla}_{[X,Y]}Z)\\
                                             =&-\tilde{\nabla}_{[X,Y]}\phi Z+\phi \tilde{\nabla}_{[X,Y]}Z\\ 
                                             =&2g(BX,Y)(\tilde{\nabla}_\xi \phi)Z
\end{aligned}                                             
\end{equation}
where $\tilde{R}$ is the curvature tensor of $\tilde{\nabla}.$ On the other hand, for each $X,Y,Z \in D$ we have

\begin{equation*}
\begin{aligned}
\tilde{R}(X,Y)Z =& R(X,Y)Z-H(X,H(Y,Z))+H(Y,H(X,Z))\\
                 & +H(H(X,Y),Z)-H(H(Y,X),Z)+(\tilde{\nabla}_XH)(Y,Z)\\
                 &-(\tilde{\nabla}_YH)(X,Z)
\end{aligned}                 
\end{equation*}
The horizontal part of $\tilde{R}(X,Y)Z$ is given by
\begin{equation*}
\begin{aligned}
 (\tilde{R}(X,Y)Z)^H =&(R(X,Y)Z)^H+g(AY,Z)AX-g(AX,Z)AY\\
                      & +\frac{1}{2}g(AX,Y)BZ-\frac{1}{2}g(AY,X)BZ\\ 
                     =&(R(X,Y)Z)^H+g(AY,Z)AX-g(AX,Z)AY\\
                      & +g(BX,Y)BZ,
\end{aligned}
\end{equation*}
thus 
\begin{equation*}
\begin{aligned}
 (\tilde{R}(X,Y)\phi Z-\phi (\tilde{R}(X,Y)Z))^H =& (R(X,Y)\phi Z)^H+g(AY,\phi Z)AX\\
                                                  & -g(AX,\phi Z)AY+g(BX,Y)B\phi Z\\
                                                  & -\phi ((R(X,Y)Z)^H+g(AY,Z)AX\\
                                                  & -g(AX,Z)AY+g(BX,Y)BZ).
\end{aligned}                                                  
\end{equation*}
Comparing this last equation with \eqref{R} we have 
\begin{equation}\label{g(B)}
\begin{aligned}
2g(BX,Y)((\tilde{\nabla}_\xi \phi)Z-B\phi Z)^H=&(R(X,Y)\phi Z)^H-\phi (R(X,Y)Z)\\
                                               & +g(AY,\phi Z)AX-g(AX,\phi Z)AY\\
                                               &-g(AY,Z)\phi AX+g(AX,Z) \phi AY.
\end{aligned}                                   
\end{equation}
If $c=0,$ i.e., all the sectional curvatures $K_p(\pi)$ with $\pi \subset D_p$ vanish, then for every $X,Y,Z\in D_p$
\begin{equation}\label{W}
\begin{aligned}
2g(BX,Y)((\tilde{\nabla}_\xi \phi)Z-B\phi Z)^H=& g(AY,\phi Z)AX-g(AX,\phi Z)AY\\
                                               &-g(AY,Z)\phi AX+g(AX,Z) \phi AY.
\end{aligned}                                   
\end{equation}
Consider $Y\in D_p$ such that $AY\neq 0.$ Hence if we take $Z=\phi AY$ we have  
\begin{equation}\label{W'}
\begin{aligned}
g(AY,AY)AX=& -2g(BX,Y)((\tilde{\nabla}_\xi \phi)\phi AY+BAY)^H+g(AX,AY)AY\\
                                               &+g(AX,\phi AY) \phi AY
\end{aligned}                                   
\end{equation}
for every $X\in D_p$ and thus $A:D_p\rightarrow D_p$ has rank $\leqslant 3.$ Then there exists $X\in D_p,X\neq 0$ such that $AX=0.$ Then, by \eqref{W} and \eqref{B} we have that 
$$
\di \eta (X,Y)((\tilde{\nabla}_\xi \phi)Z-B\phi Z)^H=0,
$$ 
for each $Y,Z\in D_p.$ Thus, being $\eta$ a contact form, for each $Z\in D_p$
$$
((\tilde{\nabla}_\xi \phi)Z-B\phi Z)^H=0.
$$
In conclusion, the equation \eqref{W'} becomes
\begin{equation*}
g(AY,AY)AX=g(AX,AY)AY+g(AX,\phi AY) \phi AY,                                 
\end{equation*}
for every $X\in D_p$, yielding $\rank (A)\leqslant 2.$ Now the contact condition implies that $\dim (\ker A)\leqslant n.$ 
Thus $2n\leqslant 2+n,$ namely $n\leqslant 2$ and hence $\dim M\leqslant 5.$
On the other hand, observing that \eqref{Ap+pA=0} also implies that $B$ anti-commute with $\phi$, by Proposition~\ref{lemmaB}, we have that $\dim M\geqslant5.$

Now suppose $c\neq 0.$ Then $A:D_p\rightarrow D_p$ is an isomorphism. Indeed, assume $X\in D_p$ such that $A X=0,$ and $Y\in D_p$ orthogonal to $X,\phi X,BX$
(for example take $Y=\phi BX$). For $X_1,X_2,X_3\in D$ we set 
$$S(X_1,X_2,X_3):= \tilde{R}(X_1,X_2)\phi X_3-\phi(\tilde{R}(X_1,X_2)X_3).$$
Then we have
\begin{equation*}
S(X,Y,X)=2g(BX,Y)(\tilde{\nabla}_\xi \phi)X=0;
\end{equation*}
but on the other hand
\begin{eqnarray*}
(S(X,Y,X))^H &=&(R(X,Y)\phi X)^H+g(AY,\phi X)AX-g(AX,\phi X)AY\\
              &&+g(BX,Y)B\phi X-\phi ((R(X,Y) X)^H+g(AY,X)AX\\
              &&-g(AX,X)AY+g(BX,Y)BX)\\
             &=&cg(X,X) \phi Y,
\end{eqnarray*}
so that $X=0.$

Now, supposing that \eqref{Ap+pA=0} holds, we apply Lemma~\ref{dim D}; fix $Y,Z\in D_p$ such that $Z\in \spn \{Y,\phi Y,AY \}^\perp$ and $g(Z,\phi AY)\neq 0$, then the equation \eqref{g(B)} becomes
\begin{equation*}
 \begin{aligned}
  g(AY,\phi Z)AX=&2g(BX,Y)((\tilde{\nabla}_\xi \phi)Z-B\phi Z)^H+cg(\phi Z,X)Y-cg(Z,X)\phi Y\\
                 & +g(AX,\phi Z)AY-g(AX,Z) \phi AY.
 \end{aligned}
\end{equation*}
This implies that $\rank(A)\leqslant 5$, so that $n\leqslant 2$. As before, we conclude that $\dim M=5$.
\end{proof}

From the above proof, we see that in the case $c=0$ one can obtain the assertion replacing the condition \eqref{Ap+pA=0} with the weaker condition 
$$
\di \eta(\phi X,\phi Y)=-\di \eta (X,Y),
$$
i.e. we have the following

\begin{cor}
 Let $(\phi,\xi,\eta, g)$ be an almost contact metric structure on a contact manifold $(M^{2n+1},\eta)$ such that 
\begin{equation*}
\di \eta(\phi X,\phi Y)=-\di \eta (X,Y),
\end{equation*}
\begin{equation*}
g((\nabla_X \phi)Y,Z)=0,
\end{equation*}
for each $X,Y,Z\in D$.
We suppose there exists $p\in M$ such that the sectional curvature  $K_p(\pi)=0,$ for each $2-$plane $\pi$ of $D_p.$ Then $\dim M=5$.
\end{cor}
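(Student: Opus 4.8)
The plan is to obtain the corollary as a direct specialization of the $c=0$ branch of the proof of Theorem~\ref{principale}, checking that this branch never actually used the full strength of the hypothesis \eqref{Ap+pA=0}. First I would observe that, since $\di\eta(X,Y)=g(BX,Y)$ for $X,Y\in D$ by \eqref{B}, the hypothesis $\di\eta(\phi X,\phi Y)=-\di\eta(X,Y)$ is equivalent to $g(B\phi X,\phi Y)=-g(BX,Y)$ for all $X,Y\in D$, which (using that $\phi$ restricts to an orthogonal complex structure on $D$) is in turn equivalent to $B\phi+\phi B=0$ on $D$. Hence the hypotheses of Proposition~\ref{lemmaB} are met, giving $\dim M=4n+1$ and $B$ an automorphism of $D$; in particular $\dim M\geqslant 5$, so it suffices to prove $\dim M\leqslant 5$.

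Next I would run the construction of the auxiliary connection $\tilde\nabla=\nabla+H$ exactly as in the proof of Theorem~\ref{principale}, using only \eqref{g(nabla phi)=0} and the formula \eqref{B} for $B$; none of the steps leading to \eqref{g(B)} invoke \eqref{Ap+pA=0}. Specializing \eqref{g(B)} to the point $p$ with $c=0$ yields \eqref{W}, and then the same two-step rank reduction applies verbatim: choosing $Z=\phi AY$ with $AY\neq 0$ gives \eqref{W'} and hence $\rank(A_p)\leqslant 3$; picking $X\in\ker A_p$, the contact condition forces $((\tilde\nabla_\xi\phi)Z-B\phi Z)^H=0$ for all $Z\in D_p$; feeding this back into \eqref{W'} gives $\rank(A_p)\leqslant 2$. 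The point to emphasize is that in this entire argument $\phi$ is only ever applied to vectors in $D$ and the identities used are $\di\eta|_{D\times D}$ nondegenerate, \eqref{g(nabla phi)=0}, and the defining relations of $H$ — so the weaker hypothesis is indeed enough here, whereas in the $c\neq 0$ case one genuinely needed $A\phi+\phi A=0$ to invoke Lemma~\ref{dim D}.

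Finally, combining $\rank(A_p)\leqslant 2$ with the contact condition $\dim(\ker A_p)\leqslant n$ (which holds because $\ker A_p$ is $\di\eta$-isotropic in $D_p$) gives $2n\leqslant 2+n$, i.e.\ $n\leqslant 2$, so $\dim M\leqslant 5$. Together with $\dim M\geqslant 5$ from Proposition~\ref{lemmaB} this yields $\dim M=5$.

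I do not anticipate a serious obstacle: the only thing to be careful about is the bookkeeping that verifies the $c=0$ branch of the theorem's proof never secretly uses $A\phi+\phi A=0$ (for instance, one should double-check that the derivation of the horizontal part of $\tilde R(X,Y)Z$ and of \eqref{g(B)} only uses the skew-symmetry built into $H(\xi,X)=\tfrac12 BX$ and not any commutation of $A$ with $\phi$). Once that is confirmed, the proof is essentially a pointer to the relevant portion of the preceding argument together with the equivalence established in the first paragraph.
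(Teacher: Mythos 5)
Your proposal is correct and is exactly the paper's own argument: the paper proves the corollary by the single remark that the $c=0$ branch of the proof of Theorem~\ref{principale} only ever uses $B\phi+\phi B=0$ (equivalent to $\di\eta(\phi X,\phi Y)=-\di\eta(X,Y)$ via \eqref{B}) rather than the full condition \eqref{Ap+pA=0}, together with Proposition~\ref{lemmaB} for the lower bound $\dim M=4n+1\geqslant 5$. Your verification that the derivation of \eqref{g(B)}, the rank reduction to $\rank(A_p)\leqslant 2$, and the isotropy argument for $\dim(\ker A_p)\leqslant n$ all go through under the weaker hypothesis is precisely the bookkeeping the paper leaves implicit.
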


\vspace{2 mm}

Almost contact metric manifolds are classified by Chinea and Gonzalez in  \cite{chinea1990}. The authors define twelve classes of manifolds $C_1,\dots ,C_{12}$.
All manifolds in the classes $C_i$ for $i \in {\{5,6,..,12\}}$ satisfy the condition \eqref{g(nabla phi)=0}, and all manifolds in $C_9$ or $C_{10}$ satisfy \eqref{g(nabla phi)=0} and \eqref{Ap+pA=0}. Thus we have the following

\begin{theorem}
Every contact manifold $(M,\eta)$ carrying an almost contact metric structure $(\phi, \xi,\eta,g)$ of class $C_9\oplus C_{10}$ has dimension $4n+1$, with $n\geqslant 1$.

If there exists $p\in M$ and $c\in \mathbb{R}$ such that the sectional curvature $K_p(\pi)=c,$ for each $2-$plane $\pi$ of $D_p$, then $\dim M=5$. 
\end{theorem}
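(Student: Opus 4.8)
The plan is to show that a $C_9\oplus C_{10}$ structure on a contact manifold is a special case of the situation treated in Proposition~\ref{lemmaB} and Theorem~\ref{principale}, so that both assertions follow at once. The only class-specific input is the translation of the Chinea--Gonzalez defining relations for $C_9$ and $C_{10}$ into the two tensorial identities \eqref{g(nabla phi)=0} and \eqref{Ap+pA=0}. Writing $F(X,Y,Z)=g((\nabla_X\phi)Y,Z)$, which is always skew in $Y,Z$, one reads off from the definitions in \cite{chinea1990} that membership in $C_9\oplus C_{10}$ forces $F(X,Y,Z)=0$ for all $X,Y,Z\in D$ --- this is exactly \eqref{g(nabla phi)=0} --- together with a symmetry of the surviving component of the form $F(\phi X,\phi Y,\xi)=-F(X,Y,\xi)$ for $X,Y\in D$. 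Using the identity $\nabla_X\xi=\phi\bigl((\nabla_X\phi)\xi\bigr)$, which follows from $\phi\xi=0$ and $\phi^2=-I+\eta\otimes\xi$, and the skew-symmetry of $\phi$ with respect to $g$, one gets $g(AX,Y)=F(X,\phi Y,\xi)$ for $X,Y\in D$, so that the above symmetry of $F(\,\cdot\,,\,\cdot\,,\xi)$ is precisely $A\phi+\phi A=0$ on $D$, i.e.\ \eqref{Ap+pA=0}. I expect this verification to be the main (and only genuinely non-formal) step of the proof.

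Granting these two identities, the first assertion is immediate. Restricting \eqref{Ap+pA=0} to $D$ and passing to skew-symmetric parts --- using that $\phi|_D$ is $g|_{D\times D}$-orthogonal, hence $(\phi|_D)^*=-\phi|_D$ --- one obtains $B\phi+\phi B=0$ on $D$, equivalently $\di\eta(\phi X,\phi Y)=-\di\eta(X,Y)$ for $X,Y\in D$. Proposition~\ref{lemmaB} then gives $\dim M=4n+1$ with $n\geqslant1$ (and, as a byproduct, that $B\colon D\to D$ is an automorphism).

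For the second assertion, note that under the extra hypothesis the structure $(\phi,\xi,\eta,g)$ satisfies simultaneously \eqref{Ap+pA=0}, \eqref{g(nabla phi)=0}, and the requirement that the sectional curvature of every $2$-plane of $D_p$ equals the constant $c$; these are exactly the hypotheses of Theorem~\ref{principale}, whose conclusion is $\dim M=5$. (If one wishes to see both bounds separately: the curvature analysis in Theorem~\ref{principale} forces $n\leqslant2$, while Proposition~\ref{lemmaB} already gives $\dim M\geqslant5$, so $\dim M=5$.) No further argument is required.
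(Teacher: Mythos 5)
Your proposal is correct and follows the same route as the paper: the paper likewise reduces the theorem to Proposition~\ref{lemmaB} and Theorem~\ref{principale} by asserting that membership in $C_9\oplus C_{10}$ yields \eqref{g(nabla phi)=0} and \eqref{Ap+pA=0} (the paper simply states these class properties without the verification you sketch). Your translation of the $J$-anti-invariance of $F(\cdot,\cdot,\xi)$ into $A\phi+\phi A=0$ on $D$, and the passage to $B\phi+\phi B=0$ for the first assertion, match the logic the paper relies on.
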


\section{Nearly cosymplectic case}\label{sec:near_cos_case}

In this section we will show that there does not exist a flat nearly cosymplectic manifold $(M, \phi,\xi,\eta, g)$ with $\eta$ a contact form.

\begin{lemma} \label{lemmaNearly}
 Let $(M,\phi,\xi,\eta,g)$ be a nearly cosymplectic manifold. Then 
\begin{enumerate} [(a)]
  \item  $\di\eta(X,Y)=g(AX,Y)$ for all $X,Y\in TM$, \label{d eta=g}
  \item $\di\eta(X,Y)=-\di\eta(\phi X,\phi Y)$ for all $X,Y\in TM$,
  \item $\xi$ is the Reeb vector field of $(M^{2n+1},\eta )$.
\end{enumerate}
If moreover $\eta$ is a contact form, then  
 \begin{enumerate} [(a),resume]
  \item for all $p\in M^{2n+1}$ $A_p$ is an isomorphism that anti-commutes with $\phi$, \label{A_iso}
  \item $g((\nabla_X\phi)Y,Z)=0$, for all $X,Y,Z\in D$,\label{ggg}
  \item $\dim M= 4n+1$.
\end{enumerate}
\end{lemma}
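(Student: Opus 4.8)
The plan is to establish the six assertions of Lemma~\ref{lemmaNearly} in order, exploiting the defining identity of a nearly cosymplectic structure, namely $(\nabla_X\phi)X=0$, equivalently $(\nabla_X\phi)Y+(\nabla_Y\phi)X=0$. First I would record the basic consequences of this identity: differentiating $\phi\xi=0$ and $\eta\circ\phi=0$, and using $\nabla_X\xi=AX$, one gets $\phi AX = -(\nabla_X\phi)\xi$ and, by the nearly cosymplectic symmetry, $(\nabla_\xi\phi)X = -(\nabla_X\phi)\xi = \phi AX$. Since $\phi$ is skew and $\nabla\phi$ behaves well, one also deduces that $A$ is skew-symmetric and that $A\xi = 0$ (the latter because for a nearly cosymplectic manifold $\xi$ is Killing, so $A = \nabla\xi$ is skew, and $g(A\xi,\xi)=0$ forces $A\xi=0$ once one checks $A$ maps into $D$). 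For (\ref{d eta=g}): compute $\di\eta(X,Y) = \tfrac12\big(X\eta(Y) - Y\eta(X) - \eta([X,Y])\big)$ and rewrite using $\eta(Z)=g(Z,\xi)$ and the Levi-Civita connection to get $\di\eta(X,Y) = \tfrac12\big(g(\nabla_X\xi,Y) - g(\nabla_Y\xi,X)\big) = \tfrac12\big(g(AX,Y)-g(AY,X)\big)$, which equals $g(AX,Y)$ precisely because $A$ is skew. Then (b) is immediate: $\di\eta(\phi X,\phi Y) = g(A\phi X,\phi Y)$, and one shows $A$ anti-commutes with $\phi$ in general (for nearly cosymplectic manifolds this is a standard identity, obtainable by expanding $(\nabla_X\phi)\phi Y$ and symmetrizing), so $g(A\phi X,\phi Y) = -g(\phi AX,\phi Y) = -g(AX,Y) + \eta(AX)\eta(Y) = -\di\eta(X,Y)$ using $A$ skew hence $\eta(AX)=g(AX,\xi)=-g(X,A\xi)=0$. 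For (c): $\eta$ being the Reeb form means $\di\eta(\xi,\cdot)=0$ and $\eta(\xi)=1$; the second is the structure axiom, and the first follows from $\di\eta(\xi,Y) = g(A\xi,Y) = 0$.

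For the second group, assume $\eta$ is a contact form. Assertion (\ref{A_iso}): restricting (\ref{d eta=g}) to $D\times D$, the non-degeneracy of $\di\eta|_{D\times D}$ forces $A|_D: D\to D$ to be injective, hence an isomorphism of $D$; combined with $A\xi=0$ this says $A$ has rank $2n$ and kernel exactly $\mathbb{R}\xi$. The anti-commutation $A\phi+\phi A=0$ was already noted. Assertion (\ref{ggg}) requires showing $g((\nabla_X\phi)Y,Z)=0$ for horizontal $X,Y,Z$; this is the one nontrivial analytic point. The idea is to use the nearly cosymplectic identity together with the contact condition. One knows $(\nabla_X\phi)Y$ is totally anti-symmetric as a $(0,3)$-tensor after lowering an index (i.e. $g((\nabla_X\phi)Y,Z)$ is skew in all three arguments), a standard fact for nearly cosymplectic manifolds. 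So it suffices to show this $3$-form vanishes when all entries lie in $D$, equivalently that the only nonzero components involve $\xi$. I would argue that for a nearly cosymplectic contact manifold the tensor $\nabla\phi$ is determined by $A$ via $(\nabla_X\phi)Y = -g(AX,Y)\xi + \eta(Y)AX + (\text{terms forcing horizontality})$ — more precisely, compare with the known formula for $\nabla\phi$ on nearly cosymplectic manifolds and the explicit $H$-correction used in the proof of Theorem~\ref{principale}, which was chosen exactly so that $(\tilde\nabla_X\phi)Y=0$ for $X,Y\in D$; unwinding $\tilde\nabla=\nabla+H$ gives $(\nabla_X\phi)Y = -(H(X,\phi Y) - \phi H(X,Y))$, and for $X,Y\in D$ the right-hand side is $-g(AX,\phi Y)\xi + \phi(g(AX,Y)\xi) = -g(AX,\phi Y)\xi$, which is vertical, so its $g$-pairing with any $Z\in D$ vanishes. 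Finally, assertion (f), $\dim M = 4n+1$: by (\ref{A_iso}) the endomorphism $B = A|_D$ (which here coincides with $A|_D$ since $A|_D$ is already skew) is a non-singular skew-symmetric operator on $D$ anti-commuting with $\phi$, so Lemma~\ref{dim D}(\ref{2}) applies with $J=\phi|_D$ and gives $\dim D \equiv 0 \pmod 4$, i.e. $\dim D = 4n$ and $\dim M = 4n+1$.

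The main obstacle I anticipate is assertion (\ref{ggg}): establishing $g((\nabla_X\phi)Y,Z)=0$ for horizontal $X,Y,Z$ cleanly, since a priori $(\nabla_X\phi)Y$ could have a horizontal component. The cleanest route is to first prove the pointwise formula $(\nabla_X\phi)Y = -g(AX,\phi Y)\,\xi$ for $X,Y \in D$ — this simultaneously yields horizontality of $(\nabla_X\phi)Y$'s failure, i.e.\ shows $(\nabla_X\phi)Y$ is purely vertical — and for that I would either invoke the known structure equations for nearly cosymplectic manifolds (e.g.\ from Blair's book or Endo's work) or re-derive them by combining the skew-symmetry of $\nabla\phi$, the nearly cosymplectic symmetry, and the identities $\phi AX = -(\nabla_X\phi)\xi$, $(\nabla_\xi\phi)X = \phi AX$ from the first paragraph. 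Once that formula is in hand, (\ref{ggg}) is immediate, and everything else is bookkeeping with skew-symmetry of $A$ and the contact non-degeneracy. The remaining items (a)--(c) and (f) are essentially formal given the skew-symmetry of $A$, $A\xi=0$, the anti-commutation with $\phi$, and Lemma~\ref{dim D}.
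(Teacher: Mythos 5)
Your items (a)--(d) and (f) are correct and essentially follow the paper's route: the Killing property of $\xi$ gives $\di\eta(X,Y)=g(AX,Y)$ via skewness of $A$, the anti-commutation $A\phi+\phi A=0$ (which the paper takes from Endo's Lemma~3.1 rather than re-deriving) yields (b) and (c), non-degeneracy of $\di\eta|_{D\times D}$ yields (\ref{A_iso}), and Lemma~\ref{dim D}, point~\ref{2}, yields the dimension statement. The genuine gap is assertion (\ref{ggg}), which you rightly single out as the one nontrivial point but do not actually prove. Your main proposed derivation is circular: the property $(\tilde{\nabla}_X\phi)Y=0$ for $X,Y\in D$ in the proof of Theorem~\ref{principale} is not a feature of the connection $\tilde{\nabla}=\nabla+H$ alone. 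Unwinding the definition gives, for $X,Y\in D$, $(\tilde{\nabla}_X\phi)Y=(\nabla_X\phi)Y+g(AX,\phi Y)\xi=((\nabla_X\phi)Y)^H$, since the vertical component $-g(AX,\phi Y)$ of $(\nabla_X\phi)Y$ is automatic from metricity of $\nabla$. Hence the statement $(\tilde{\nabla}_X\phi)Y=0$ is literally equivalent to hypothesis \eqref{g(nabla phi)=0}, i.e.\ to the very assertion (\ref{ggg}) you are trying to establish; you cannot import it from Theorem~\ref{principale}, where it is an assumption.

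Your fallback of invoking ``known structure equations'' also does not work as stated: the pointwise formula $(\nabla_X\phi)Y=-g(AX,\phi Y)\xi$ for $X,Y\in D$ is not a general nearly cosymplectic identity. On $M=N^6\times\mathbb{R}$ with $N$ strictly nearly K\"ahler and $\xi=\partial_t$, one has $A=0$ while $(\nabla_X\phi)Y=(\nabla^N_XJ)Y\neq 0$ and horizontal; of course $\eta=dt$ is not a contact form there, which shows that the contact hypothesis must enter in an essential, quantitative way. Total skew-symmetry of $g((\nabla_X\phi)Y,Z)$ together with $(\nabla_X\phi)\xi=-\phi AX$ only controls the components involving $\xi$, not the purely horizontal ones. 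What the paper actually does is combine the contact condition --- which by (\ref{d eta=g}) and (\ref{A_iso}) makes $A_p:D_p\to D_p$ surjective --- with Endo's identity $g((\nabla_X\phi)Y,AZ)=\eta(Y)g(A^2X,\phi Z)-\eta(X)g(A^2Y,\phi Z)$: for $X,Y,Z\in D$ the right-hand side vanishes and $AZ$ sweeps out all of $D$, giving (\ref{ggg}). Some such identity relating $\nabla\phi$ to $A$ is indispensable here, and your proposal never pins one down.
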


\begin{proof}
Let $\nabla$ be the Levi-Civita connection of $g.$ Since $\xi$ is Killing, we have
 \begin{eqnarray*}
    2g(AX,Y) &=& 2g(\nabla_X \xi,Y)\\
             &=& X(g(\xi,Y))+ \xi(g(Y,X)) -Y(g(X,\xi))\\
               && +g([X,\xi],Y)-g([\xi,Y],X)+g([Y,X],\xi)\\
             &=&  X(g(\xi,Y))-Y(g(X,\xi))+g([Y,X],\xi)\\
             &=& X(\eta (Y))-Y(\eta (X))-\eta ([X,Y])\\
             &=& 2\di\eta (X,Y)
 \end{eqnarray*}
 for all $X,Y\in TM$. By Lemma 3.1 of \cite{endo2005curvature} we have that 
 \begin{equation*}
 A\phi +\phi A=0.
 \end{equation*}
Then 
$$
\di\eta (\phi X,\phi Y)=g(A\phi X, \phi Y)=-g(AX,Y)=-\di\eta (X,Y),
$$
from which it follows that $$\di\eta (X,\xi)=-\di\eta (\phi X,\phi \xi)=0.$$ 
As a consequence of \ref{d eta=g} we have that $A_p$ is an isomorphism. Finally \ref{ggg} follows from \ref{A_iso} and the equation
\begin{equation*}
 g((\nabla_X\phi)Y,AZ)=\eta (Y)g(A^2X,\phi Z)-\eta (X)g(A^2Y,\phi Z)
\end{equation*}
due to H. Endo \cite{endo2005curvature}.
\end{proof}

Hence, as a consequence of Theorem~\ref{principale}, we can state
\begin{theorem} \label{Th_nearly}
 Let $(M^{2n+1},\eta)$ be a contact manifold endowed with a nearly cosymplectic structure $(\phi,\xi,\eta,g).$ 
 
 Suppose there exist $p\in M$ and $c\in \mathbb{R}$ such that for each $2-$plane $\pi$ of $D_p,$ $K_p(\pi)=c.$  Then $c\neq 0$ and $\dim M=5$.
\end{theorem}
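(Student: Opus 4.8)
The plan is to derive Theorem~\ref{Th_nearly} as a direct corollary of Theorem~\ref{principale} together with Lemma~\ref{lemmaNearly}. First I would invoke Lemma~\ref{lemmaNearly}: since $(M^{2n+1},\eta)$ carries a nearly cosymplectic structure and $\eta$ is a contact form, point \ref{A_iso} gives that $A_p$ is an isomorphism at \emph{every} point $p$ and that $A\phi+\phi A=0$, i.e.\ condition~\eqref{Ap+pA=0} holds; point \ref{ggg} gives $g((\nabla_X\phi)Y,Z)=0$ for all $X,Y,Z\in D$, i.e.\ condition~\eqref{g(nabla phi)=0} holds. Hence the hypotheses of Theorem~\ref{principale} are satisfied for the manifold $(M^{2n+1},\eta)$ with its associated almost contact metric structure $(\phi,\xi,\eta,g)$.

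Next, under the standing assumption that there exist $p\in M$ and $c\in\mathbb{R}$ with $K_p(\pi)=c$ for every $2$-plane $\pi\subset D_p$, Theorem~\ref{principale} applies verbatim and yields $\dim M=5$, together with the equivalence ``$A_p$ is an isomorphism $\iff c\neq0$''. The final point is to rule out $c=0$: by Lemma~\ref{lemmaNearly}\ref{A_iso} the endomorphism $A_p$ is an isomorphism at the point $p$ in question, so the equivalence forces $c\neq0$. This completes the argument.

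There is essentially no obstacle here, since all the analytic work has been done in Theorem~\ref{principale} and in Lemma~\ref{lemmaNearly} (which itself leans on the curvature identities of Endo). The only thing to be slightly careful about is the logical direction of the last step: one should phrase it so that the \emph{a priori} invertibility of $A_p$ (valid for nearly cosymplectic contact manifolds at every point) is what excludes the flat case $c=0$, rather than attempting any independent verification. In particular this recovers the non-existence of a flat nearly cosymplectic contact metric, promised in the section introduction, as the special case $c=0$: no such structure can occur even at a single point, in any dimension $\geqslant 5$, and dimension $3$ is already excluded because $\dim M=4n+1$.

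\begin{proof}
Since $(M^{2n+1},\eta)$ is a contact manifold endowed with the nearly cosymplectic structure $(\phi,\xi,\eta,g)$, Lemma~\ref{lemmaNearly} applies. By point \ref{A_iso}, at every point $p\in M$ the endomorphism $A_p=(\nabla\xi)_p$ is an isomorphism that anti-commutes with $\phi$; in particular equation~\eqref{Ap+pA=0} holds. By point \ref{ggg}, $g((\nabla_X\phi)Y,Z)=0$ for all $X,Y,Z\in D$, which is precisely equation~\eqref{g(nabla phi)=0}. Hence the almost contact metric structure $(\phi,\xi,\eta,g)$ on $(M^{2n+1},\eta)$ satisfies the hypotheses of Theorem~\ref{principale}.

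By assumption there exist $p\in M$ and $c\in\mathbb{R}$ such that $K_p(\pi)=c$ for each $2$-plane $\pi$ of $D_p$. Applying Theorem~\ref{principale} we conclude that $\dim M=5$, and moreover that $A_p$ is an isomorphism if and only if $c\neq0$. But we have already observed, via Lemma~\ref{lemmaNearly}\ref{A_iso}, that $A_p$ is an isomorphism. Therefore $c\neq0$.
\end{proof}
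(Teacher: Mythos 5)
Your proposal is correct and follows exactly the route the paper takes: Theorem~\ref{Th_nearly} is stated there as an immediate consequence of Theorem~\ref{principale}, with Lemma~\ref{lemmaNearly} supplying the hypotheses \eqref{Ap+pA=0} and \eqref{g(nabla phi)=0} and the everywhere-invertibility of $A$ that rules out $c=0$. Nothing to add.
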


\begin{rem}
\emph{H. Endo in \cite{endo2005curvature} determines the curvature tensor of a nearly cosymplectic manifold $(M,\phi,\xi,\eta,g)$ with pointwise constant $\phi$-sectional curvature $c$
\begin{equation}\label{endo}
\begin{aligned}
 4g(R(W,X)Y,Z)=&g((\nabla_W \phi)Z,(\nabla_X \phi)Y)-g((\nabla_W \phi)Y,(\nabla_X \phi)Z)\\
               &-2g((\nabla_W \phi)X,(\nabla_Y \phi)Z)+g(\nabla_W \xi ,Z)g(\nabla_X \xi,Y)\\
               &-g(\nabla_W \xi,Y)g(\nabla_X \xi,Z)-2g(\nabla_W \xi,X)g(\nabla_Y \xi,Z)\\
               &-\eta(W)\eta(Y)g(\nabla_X \xi,\nabla_Z \xi)+\eta(W)\eta(Z)g(\nabla_X \xi,\nabla_Y \xi)\\
               &+\eta(X)\eta(Y)g(\nabla_W \xi,\nabla_Z \xi)-\eta(X)\eta(Z)g(\nabla_W \xi,\nabla_Y \xi)\\
               &+c\{g(X,Y)g(Z,W)-g(Z,X)g(Y,W)\\
               &+\eta(Z)\eta(X)g(Y,W)-\eta(Y)\eta(X)g(Z,W)\\
               &+\eta(Y)\eta(W)g(Z,X)-\eta(Z)\eta(W)g(Y,X)\\
               &+g(\phi Y,X)g(\phi Z,W)-g(\phi Z,X)g(\phi Y,W)\\
               &-2g(\phi Z,Y)g(\phi X,W)\}.
\end{aligned}
\end{equation}
One can obtain the conclusion of Theorem~\ref{Th_nearly} also using this formula together with Lemma~\ref{lemmaNearly}. If there exists a point $p\in M$ such that the sectional curvature of all the $2$-planes of $D_p$ is constant, then for all $X,Y,W\in D$ we have
\begin{equation*}
 \begin{aligned}
R(W,X)Y&=c(g(Y,X)W-g(Y,W)X),\\
g((\nabla_W \phi)Z,(\nabla_X \phi)Y)&=g(\phi Y,AX)g(\phi Z,AW),
 \end{aligned}
\end{equation*}
and \eqref{endo} becomes
\begin{equation*}
 \begin{aligned}
 3c(g(Y,X)W-g(Y,W)X)=&-g(\phi Y,AX)\phi AW+g(\phi Y,AW)\phi AX\\
               &+2g(\phi X, AW)\phi AY+g(AX,Y)AW\\
               &-g(AW,Y)AX-2g(AW,X)AY\\
               &+c\{-g(X,\phi Y)\phi W+g(\phi Y,W)\phi X\\
               &+2g(\phi X,W)\phi Y \}.
 \end{aligned}
\end{equation*}
If in particular $Y=AW$, then 
\begin{equation*}
\begin{aligned}
  3cg(X,AW)W=&\{-g(\phi AW,AX)+2cg(\phi X,W)\}\phi AW+2g(\phi X,AW)\phi A^2 W\\
             &+g(AX,AW)AW-g(AW,AW)AX-2g(AW,X)A^2W\\
             &-cg(\phi AW,X)\phi W,
\end{aligned}
\end{equation*}
and hence $ \rank(A) \leqslant 6.$ By Lemma~\ref{lemmaNearly} it follows that $\dim M=5$.}

\end{rem}

\section*{Acknowledgments}
The author thanks Antonio Lotta for useful discussions and suggestions.

\vspace{3 mm}
\noindent 
Eugenia Loiudice

\vspace{1 mm}
\noindent Dipartimento di Matematica, Università di Bari ``Aldo Moro'', 

\noindent Via Orabona 4, 70125 Bari, Italy

\noindent \emph{e-mail}: eugenia.loiudice@uniba.it

\end{document}